\documentclass[runningheads]{llncs}
\usepackage[T1]{fontenc}
%
\usepackage{amsmath}
\usepackage{amssymb}
\newtheorem{prop}{Proposition}
\newtheorem{thm}{Theorem}
\newtheorem{dfn}{Definition}
\newtheorem{cor}{Corollary}
\newcommand{\cF}{\mathcal{F}}
\newcommand{\cA}{\mathcal{A}}
\usepackage{graphicx}

%
%
\begin{document}

\title{Shape Theory via the Atiyah--Molino Reconstruction and Deformations}

%
%
\author{No\'emie C. Combe\inst{1}\orcidID{0000-0003-4540-7376} \and
Hanna K. Nencka }
\authorrunning{N. Combe et al.}
%
\institute{University of Warsaw, Ul. Banacha 2, 02-097 Warsaw, Poland
\email{n.combe@uw.edu.pl}\\
\url{https://noemie-combe-23.webself.net}}
\maketitle              
\begin{abstract}
Reconstruction problems lie at the very heart of both mathematics and science, posing the enigmatic challenge: \emph{How does one resurrect a hidden structure from the shards of incomplete, fragmented, or distorted data?} In this paper, we introduce a new approach that harnesses the profound insights of the Vaisman Atiyah--Molino framework. Our method renders the reconstruction problem computationally tractable while exhibiting exceptional robustness in the presence of noise. Central to our theory is the Hantjies tensor—a curvature-like invariant that precisely quantifies noise propagation and enables error-bounded reconstructions. This synthesis of differential geometry, integral analysis, and algebraic topology not only resolves long-standing ambiguities in inverse problems but also paves the way for transformative applications across a broad spectrum of scientific disciplines.
\keywords{Shape Theory  \and Fiber bundles \and Foliations.}
\end{abstract}

\section{Introduction}

Reconstruction problems stand among the biggest challenges in mathematics and applied sciences: \emph{How does one rebuild a hidden structure from fragmented, incomplete, or distorted data?} Whether it is deducing the three-dimensional conformation of a protein from blurry two-dimensional microscope images, or inferring the structure of a graph from partially observed subgraphs, these inverse problems lie at the core of diverse disciplines—ranging from cryo-electron microscopy and  tomography to astronomy, radio-astronomy, medicine and graph theory as well as modern machine learning.

\, 

The intrinsic difficulty of reconstruction is underscored by the fact that infinitely many structures may be consistent with the same incomplete data. In practice, traditional methods, such as iterative algorithms, are hampered by three fundamental challenges:
\begin{enumerate}
    \item \textbf{Uniqueness:} Does the available data suffice to uniquely determine the original structure?
    \item \textbf{Noise:} How can one mitigate the deleterious effects of noise and measurement imperfections inherent in real-world data?
    \item \textbf{Complexity:} How does one efficiently navigate the vast, often combinatorially complex, space of potential solutions?
\end{enumerate}

\, 

In this work, we introduce a {\it new framework for reconstruction} that {\it unifies} and {\it extends} previous approaches by leveraging the deep insights of the \emph{Vaisman} \cite{V} and \emph{Atiyah--Molino} \cite{A,M} frameworks. Rather than addressing each inverse problem in isolation, our approach employs a unified geometric language that recasts reconstruction as the extraction of hidden symmetries and invariants within the data.

\, 

On one hand, the {\bf Vaisman framework} interprets reconstruction as a geometric unraveling of symmetry. It resolves the inherent multiplicity of solutions in underdetermined inverse problems by inducing a stratified decomposition of the configuration space into transverse foliations. Each foliation represents an equivalence class of solutions that are indistinguishable under a given projection operator, thereby transforming the ambiguity into a structured hierarchy of invariant submanifolds. The intersection of these orthogonal layers of constraints is isolated, which in turn guarantees unique solutions and robustness against noise. For instance, objects that yield identical two-dimensional images naturally lie on the same leaf of the foliation.

On the other hand, the {\bf Atiyah--Molino framework} reinterprets the reconstruction problem as a fiber bundle phenomenon. By splitting the problem into tangent directions (encoding local deformations, such as the tilting of a protein) and normal directions (capturing global invariants such as centroids and moments), this approach achieves significant computational tractability. The Atiyah--Molino exact sequence decomposes the problem into algebraic equations, thereby obviating the need for brute-force searches. Furthermore, the introduction of the Hantjies tensor---a curvature-like invariant---provides a precise measure of noise propagation, thereby allowing for error-bounded reconstructions.

This synthesis of differential geometry, algebraic topology, and integral analysis not only resolves long-standing ambiguities in inverse problems but also paves the way for transformative applications across science and engineering. In the subsequent sections, we develop the theoretical underpinnings of our approach and demonstrate its efficacy in several paradigmatic reconstruction scenarios.

Applications of our method include:

\begin{itemize}
\item {\bf Medicine}: Reconstructing tumours from sparse MRI slices with error guarantees.
\item {\bf Quantum Computing:} Inferring quantum states from noisy partial measurements.
\item {\bf Artificial Intelligence}: Training generative models to impute missing data through geometric manifold learning and differential topological constraints, as opposed to relying on conventional statistical priors or probabilistic inference frameworks
\end{itemize}

\begin{figure}[h]
    \centering
    \includegraphics[width=0.8\textwidth]{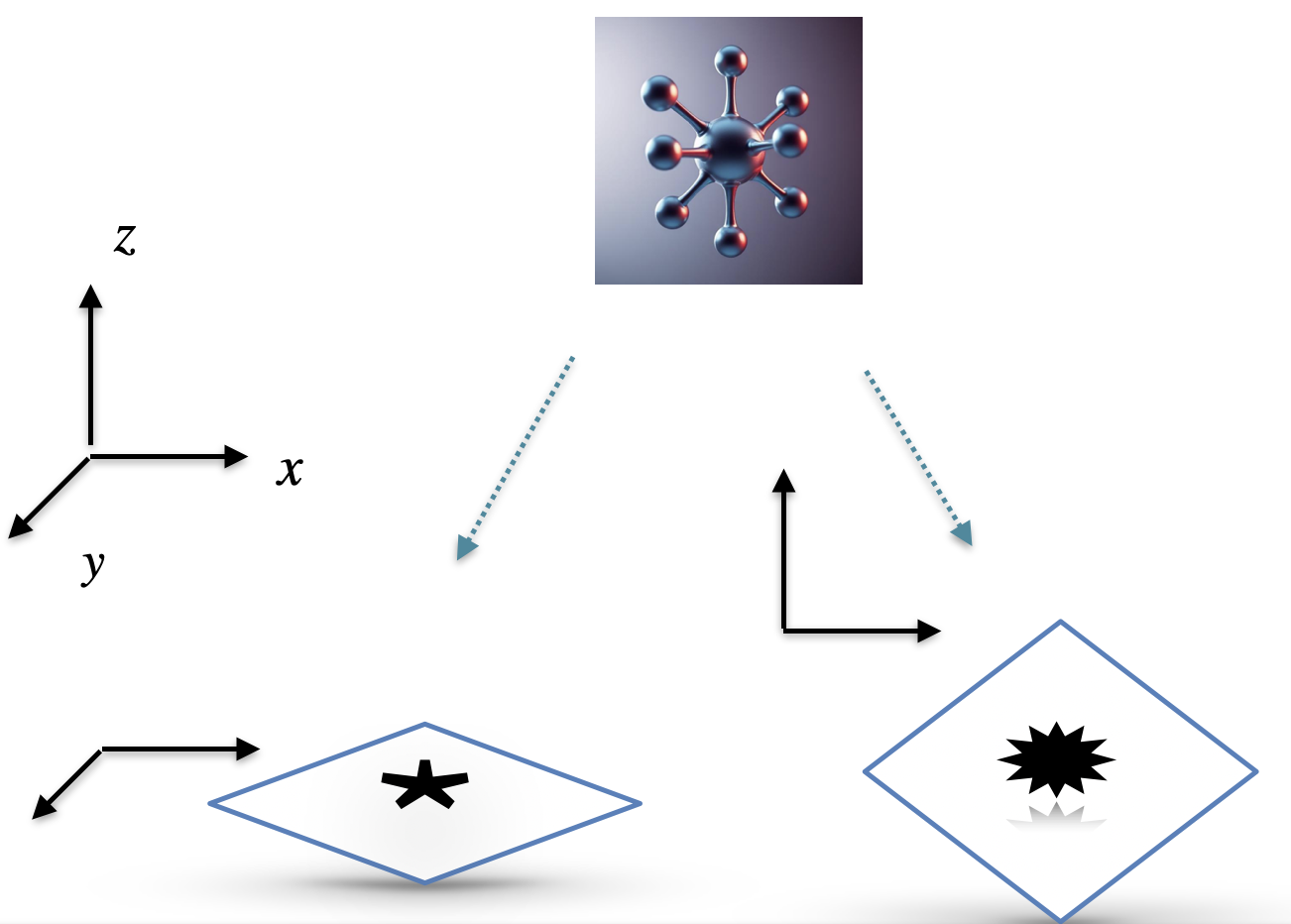}
    \caption{Illustration of an imaging process: each projection image corresponds the line integrals of a molecule rotated by a
three-dimensional rotation.}
    \label{fig:1}
\end{figure}

{\bf Acknowledgements} The first author thanks Pawel Dlotko for discussions on TDA. Both authors thank Philippe Combe for comments on this paper. This research is part of the project No. 2022/47/P/ST1/01177 co-founded by the National Science Centre and the European Union's Horizon 2020 research and innovation program, under the Marie Sklodowska Curie grant agreement No. 945339.For the purpose of Open Access, the author has applied a CC-BY public copyright licence to any Author Accepted Manuscript (AAM) version arising from this submission.
\includegraphics[width=1cm, height=0.5cm]{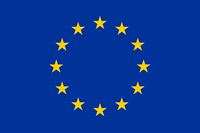}.

\section{Projections and Inverse Morphisms}
\subsection{} We consider at least two independent projections of a three-dimensional object onto distinct two-dimensional planes. The existence of such independent projections is necessary to ensure sufficient information for reconstruction. This leads to the problem of defining an inverse function that maps the planar projections back to the original volumetric object.

\begin{itemize}
\item A single projection collapses three dimensional information onto a plane, losing depth and orientation data. For example, in electron microscopy, a single micrograph of a particle provides no information about its tilt angle relative to the imaging axis.

\item By considering projections along distinct planes, the system gains orthogonal constraints (e.g., tilt angles and rotation axes) that resolve ambiguities. This aligns with Gelfand and Goncharov’s method, which uses statistical properties of projections, such as first moments of plane sections, to derive orientation parameters.
\end{itemize}

\subsection{}  The Radon transform $Rf(\theta,t)$ integrates a function 
$f(x,y,z)$ along planes parameterized by angle 
$\theta$ and offset $t$. Its inverse requires integrating over all possible angles, but practical applications (e.g., cryo-EM) use finite projections. 

 For discrete objects (e.g., particles on a line), the inverse function can be constructed as a linear system where each projection contributes equations. Two independent projections ensure the system is determined (solvable) under non-degenerate conditions \cite{GG}.
\medskip 

\subsection{}  A key idea is to leverage the results of Neifeld on geometric projective 2-dimensional spaces. From these results, it follows that the independent projections induce a pair of independent connections. These connections allow us to extract curvature-related information in two dimensions, including the Riemann tensor and Ricci curvature. Understanding whether these curvature tensors arise from an associative or commutative algebraic structure further informs the reconstruction process.

\, 

Neifeld’s involution principle bridges projective geometry and differential geometry, enabling a dual-connection framework for 3D reconstruction. By interpreting projections as inducing independent connections on $\mathbb{C}P^2$, this approach generalizes classical integral geometry methods and enhances their robustness, particularly in complex or symmetric settings. Further work could explore applications to quantum state tomography (via projective Hilbert spaces) or algebraic varieties in $\mathbb{C}P^n$.

\begin{prop}
~

(i) Let \( O \subset \mathbb{CP}^3 \) be a smooth, non-symmetric three dimensional object embedded in the complex projective space, and let \( \Pi_1, \Pi_2: \mathbb{CP}^3 \dashrightarrow \mathbb{CP}^2 \) be two independent rational projections onto distinct complex projective planes. Assume:

\begin{enumerate}
\item  {\bf Non-degenerate projections}: The restrictions of \( \Pi_1 \) and \( \Pi_2 \) to \( O \) are immersive; i.e., the differentials \( d\Pi_1 \) and \( d\Pi_2 \) have full rank.

\item  {\bf Involution symmetry:} The projections satisfy \[\Pi_2 = \iota \circ \Pi_1 ,\] where \( \iota: \mathbb{CP}^2 \to \mathbb{CP}^2 \) is an involution (e.g., a polarity induced by the Fubini-Study metric).

\end{enumerate}

Each projection \( \Pi_i \) defines a holomorphic line bundle \( L_i \to \mathbb{CP}^2 \) with a connection \( \nabla_i \) derived from the Fubini-Study metric. The involution \( \iota \) induces a duality \[ L_1 \leftrightarrow L_2^* ,\] making \( (\nabla_1, \nabla_2) \) a dual pair.
\end{prop}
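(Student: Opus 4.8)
The plan is to make each ingredient of the statement concrete and then read off the duality from the functorial behaviour of Chern connections. First I would pin down the source of the line bundles: a rational projection $\Pi_i : \mathbb{CP}^3 \dashrightarrow \mathbb{CP}^2$ is the linear projection away from a centre point $p_i$, and by hypothesis (1) its differential $d\Pi_i$ has full rank on $O$, so the map is an immersion of $O$ away from the base locus $\{p_i\}$. The tautological choice is $L_i := \mathcal{O}_{\mathbb{CP}^2}(1)$, the hyperplane bundle on the target, whose sections are exactly the linear forms cutting out the projected image; immersivity guarantees that $\Pi_i^{*}L_i$ restricts nondegenerately to $O$ and is base-point free there. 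I would then equip $L_i$ with the Fubini--Study Hermitian metric $h_i$ and \emph{define} $\nabla_i$ to be the associated Chern connection, the unique connection compatible with both the holomorphic structure and $h_i$, whose curvature is a multiple of the Fubini--Study Kähler form. This makes precise the phrase ``a connection $\nabla_i$ derived from the Fubini--Study metric.''

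Next I would analyse the involution, which is the heart of the argument. A polarity $\iota$ attached to the Fubini--Study form $H$ is the correlation $\mathbb{CP}^2 \xrightarrow{\sim} (\mathbb{CP}^2)^{*}$, $[v] \mapsto [H(v,-)]$, exchanging points and lines. The crucial computation is its effect on the hyperplane bundle: under the correlation the hyperplane class on the dual space is carried to the dual of $\mathcal{O}(1)$, so that $L_2 \cong \iota^{*}\mathcal{O}(1) \cong \mathcal{O}_{\mathbb{CP}^2}(-1) = L_1^{*}$. Combined with $\Pi_2 = \iota \circ \Pi_1$, this yields the asserted identification $L_1 \leftrightarrow L_2^{*}$.

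Finally I would upgrade the bundle isomorphism to the statement about connections. The Chern connection is functorial for Hermitian holomorphic bundles, and the Chern connection of the dual bundle $(L_1^{*}, h_1^{*})$ is precisely the dual connection $\nabla_1^{*}$ in the statistical (information-geometric) sense: under the flat evaluation pairing $L_1 \otimes L_1^{*} \to \mathcal{O}_{\mathbb{CP}^2}$ one has the Leibniz identity $d\langle s,t\rangle = \langle \nabla_1 s, t\rangle + \langle s, \nabla_1^{*} t\rangle$. Because $\iota$ is an isometry for $H$, it carries $h_2$ to $h_1^{*}$ and hence $\nabla_2$ to $\nabla_1^{*}$; the Leibniz identity then says exactly that $(\nabla_1,\nabla_2)$ is a dual pair with respect to the Fubini--Study metric, which is the conclusion.

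I expect the main obstacle to be the second step, the bundle-level behaviour of the polarity. One must fix the nature of $\iota$ carefully, since a Hermitian polarity is conjugate-linear (antiholomorphic) whereas a symmetric-form polarity is holomorphic, and only with the correct choice does $\mathcal{O}(1)$ get sent to $\mathcal{O}(-1)$ rather than back to itself; reconciling the identification $(\mathbb{CP}^2)^{*}\cong\mathbb{CP}^2$ with the requirement that $H$ (hence $h_i$) be preserved is where the argument must be made rigorous. The immersivity hypothesis (1) is needed only to ensure that all of these bundles and connections restrict nondegenerately to $O$ away from the base loci of the rational maps, so it enters the conclusion but is not itself the difficulty.
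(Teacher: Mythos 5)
The paper states this proposition without any proof --- the only argument supplied in that section of the paper is the proof of Proposition~2 --- so there is no author's argument to compare yours against; what follows assesses your attempt on its own terms. Your overall strategy (take $L_i$ to be hyperplane bundles, equip them with the Fubini--Study Hermitian metric, let $\nabla_i$ be the associated Chern connections, and read the duality off the functorial behaviour of the Chern connection under $\iota$) is the natural way to give the statement content, and your final step --- that the Chern connection of $(L_1^*,h_1^*)$ is the dual connection in the Leibniz sense $d\langle s,t\rangle=\langle\nabla_1 s,t\rangle+\langle s,\nabla_1^* t\rangle$ --- is correct as far as it goes.

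The gap is exactly where you locate it, and your write-up does not resolve it. If $\iota$ is a holomorphic involution of $\mathbb{CP}^2$, it is induced by a linear automorphism, hence preserves the ample generator of $\operatorname{Pic}(\mathbb{CP}^2)\cong\mathbb{Z}$, so $\iota^*\mathcal{O}(1)\cong\mathcal{O}(1)$; your chain $L_2\cong\iota^*\mathcal{O}(1)\cong\mathcal{O}(-1)$ then fails and one obtains $L_1\cong L_2$ rather than $L_1\cong L_2^*$. If instead $\iota$ is the anti-holomorphic Hermitian polarity, then $c_1$ does change sign, but $\iota$ is really a correlation $\mathbb{CP}^2\to(\mathbb{CP}^2)^*$ (a self-map only after a non-canonical identification), $\iota^*\mathcal{O}(1)$ carries the conjugate holomorphic structure, and $L_2$ is no longer a holomorphic line bundle with Fubini--Study Chern connection in the same sense as $L_1$. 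This also conflicts with your tautological choice $L_i=\mathcal{O}_{\mathbb{CP}^2}(1)$ for both $i$: the conclusion would force $L_2\cong\mathcal{O}(-1)$, which has no nonzero holomorphic sections and so cannot be the bundle ``whose sections are the linear forms cutting out the projected image.'' Until you commit to one reading of $\iota$ and carry the bundle identification through consistently, the central claim $L_1\leftrightarrow L_2^*$ is not established. A smaller point: immersivity of a map from a complex three-dimensional $O$ to $\mathbb{CP}^2$ is impossible, so hypothesis (1) is only coherent for a real three-dimensional object, which affects how much of the holomorphic formalism actually restricts to $O$ in the way you assume.
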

\medskip 

\subsubsection{Centroids}

The \emph{centroid} (or first moment) of a geometric shape is its ``average position" in space, computed as the arithmetic mean of all points in the object. For a three dimensional object projected onto a 2D plane, if the object is represented as a set of points or a density distribution, the centroid of its projection onto a plane is the weighted average position of those points in that plane.

For a projection 
\[
\Pi_i: \mathbb{R}^3 \to \mathbb{R}^2,
\]
the centroid \(\bar{p}_i \in \mathbb{R}^2\) is given by
\[
\bar{p}_i = \left( \frac{1}{N}\sum_{k=1}^{N} x_k, \; \frac{1}{N}\sum_{k=1}^{N} y_k \right),
\]
where \((x_k,y_k)\) are the coordinates of the projected points and \(N\) is the total number of points.

The centroid encodes the translational symmetry of the projected data. For example, shifting the three dimensional object in space shifts the centroid linearly in the projection.

\subsubsection{Moment Maps}
A \emph{moment map} generalizes the concept of centroids to algebraic/geometric settings, often encoding symmetry-invariant properties of an object.

The moment map 
\[
\mu_i: O \to \mathbb{C}^2
\]
assigns to the three dimensional object \(O\) the centroid of its projection onto the plane \(\Pi_i\). If \(O\) is a density distribution, \(\mu_i\) computes the first statistical moment (mean) of the projection. For a line or curve, \(\mu_i\) corresponds to the centroid of its projected trace.

Each \(\mu_i\) provides a linear constraint on the orientation of \(O\). Combining \(\mu_1\) and \(\mu_2\) (from two distinct projections) resolves ambiguities in the three dimensional orientation.

\begin{prop}\label{P:2}
Let \( \mu_i: O \to \mathbb{C}^2 \) be the first moment maps of \( O \) with respect to \( \Pi_i \), encoding the centroids of the projected data.

\smallskip 

Under these conditions:

\begin{itemize}
\item  {\bf Uniqueness}: The orientation of \( O \) (i.e., its position modulo projective transformations) is uniquely determined by the compatibility of \( \nabla_1 \) and \( \nabla_2 \) acting on \( \mu_1 \) and \( \mu_2 \).

\item {\bf Reconstruction}: The original object \( O \) can be reconstructed as the intersection of the parallel transports along \( \nabla_1 \) and \( \nabla_2 \), applied to the moment maps \( \mu_1 \) and \( \mu_2 \).
\end{itemize}
Explicitly, there exists a unique solution \( v \in T\mathbb{CP}^3 \) (a direction vector modulo scaling) satisfying:

\begin{equation}\label{E:1}
\begin{cases}
\nabla_1 \mu_1 = v \cdot \omega_1, \\
\nabla_2 \mu_2 = v \cdot \omega_2,
\end{cases}
\end{equation}

where \( \omega_i \) are connection 1-forms (local representatives of the connections) encoding the involution duality: \( \omega_1 = \iota^* \omega_2 \), where $\iota:\mathbb{C}P^3\to \mathbb{C}P^3$ is an involution. 
\end{prop}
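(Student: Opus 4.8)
\noindent The plan is to convert the coupled system \eqref{E:1} into a pointwise problem in linear algebra on the tangent spaces of $\mathbb{CP}^3$, and then to read off existence, uniqueness, and the reconstruction recipe from the non-degeneracy hypotheses of the preceding proposition together with the involution duality $\omega_1=\iota^{*}\omega_2$. First I would fix a base point $p\in O$ and trivialize the line bundles $L_1,L_2$ on a neighbourhood of $p$, so that the connections are represented by the $1$-forms $\omega_i$ and the moment maps $\mu_i$ become $\mathbb{C}^2$-valued functions. In such a trivialization $\nabla_i\mu_i = d\mu_i+\omega_i\mu_i$ is computable from the projected centroid data, while the right-hand side $v\cdot\omega_i$ depends linearly on the unknown direction $v\in T_p\mathbb{CP}^3$. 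Each equation of \eqref{E:1} thus takes the form $A_i v=b_i$, with $A_i$ the matrix of $v\mapsto v\cdot\omega_i$ and $b_i=\nabla_i\mu_i$ known; the whole identification is to be understood up to the $\mathbb{C}^{*}$ factor inherent in the line bundles, which is precisely the ``modulo scaling'' of the statement.

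\smallskip
\noindent The existence of a common solution is where the involution hypothesis carries the argument, and this is the step I expect to be the main obstacle. Because $\Pi_2=\iota\circ\Pi_1$ and the dual-pair structure $L_1\leftrightarrow L_2^{*}$ gives $\omega_1=\iota^{*}\omega_2$, the second equation should be the $\iota$-pullback of the first. I would make this explicit by showing that $\iota$ intertwines the two constraint maps, so that $A_2=(d\iota)^{-1}A_1\,d\iota$ and $b_2=\iota_{*}b_1$ under the line-bundle identification; consequently every solution of the first equation automatically satisfies the second, and the system is consistent rather than over-determined. The delicate point here is to verify that the compatibility $\omega_1=\iota^{*}\omega_2$ is exactly the integrability condition that transports the covariant derivative of $\mu_1$ to that of $\mu_2$, which requires tracking the duality $L_1\leftrightarrow L_2^{*}$ through the covariant differentiation and checking that no curvature obstruction appears.

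\smallskip
\noindent Uniqueness modulo scaling then follows from the rank hypotheses. Since $d\Pi_1$ and $d\Pi_2$ have full rank on $O$ and the two projections are independent, I would show that together they impose two independent linear conditions on the three complex dimensions of $T_p\mathbb{CP}^3$, so that the solution space of \eqref{E:1}, described by the stacked operator $\begin{pmatrix}A_1\\ A_2\end{pmatrix}$, is one-dimensional and spanned by the scaling direction of $v$. Hence the solution is a single projective direction $[v]\in\mathbb{P}(T_p\mathbb{CP}^3)$, which is the content of the Uniqueness assertion: the orientation of $O$ modulo projective transformations is fixed by the joint action of $\nabla_1$ and $\nabla_2$ on $\mu_1,\mu_2$.

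\smallskip
\noindent Finally, for the Reconstruction assertion I would propagate the direction $v$ away from $p$ by parallel transport. The $\nabla_1$-parallel transport of $\mu_1$ and the $\nabla_2$-parallel transport of $\mu_2$ sweep out two families of submanifolds; by the same rank condition these intersect transversally, and their intersection recovers $O$ point by point. Thus the unique direction field $[v]$ determines the orientation, while the transverse intersection of the two parallel transports reconstructs the object, completing both claims of the proposition.
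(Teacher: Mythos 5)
Your overall strategy (pointwise linearization of the system \eqref{E:1}, consistency via the involution, uniqueness via rank, reconstruction via intersecting parallel transports) parallels the paper's proof, which argues via (i) duality of connections forcing \(F_{\nabla_1}=-F_{\nabla_2}\), (ii) full rank of the moment constraints from immersivity, and (iii) an intersection-theoretic step (B\'ezout) using non-symmetry of \(O\). However, your treatment of the involution contains a genuine internal contradiction. You argue for consistency by showing that the second equation is the \(\iota\)-pullback of the first, so that ``every solution of the first equation automatically satisfies the second.'' If that were so, the stacked operator \(\begin{pmatrix}A_1\\ A_2\end{pmatrix}\) would have the same kernel as \(A_1\) alone, and your subsequent claim that the two equations ``impose two independent linear conditions'' cutting the solution space down to a single projective direction would fail: one \(\mathbb{C}^2\)-valued condition on \(T_p\mathbb{CP}^3\) does not isolate a line. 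The two roles must be separated, as the paper does: the involution duality \(\omega_1=\iota^*\omega_2\) (equivalently \(F_{\nabla_1}=-F_{\nabla_2}\)) guarantees only that the two constraints are \emph{compatible} (no contradictory equations), while the \emph{independence} needed for uniqueness comes from the transversality of the non-coaxial projections \(\Pi_1,\Pi_2\), which is a separate hypothesis you invoke only in passing. Note also that under your own identification \(A_2=(d\iota)^{-1}A_1\,d\iota\), the second equation constrains \(d\iota\cdot v\), not \(v\); it is equivalent to the first only on the \(\iota\)-fixed locus, so the claimed automatic implication does not hold in general.

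A second, smaller gap: your reconstruction step replaces the paper's appeal to B\'ezout's theorem and the non-symmetry of \(O\) with a local transversality argument. Transversality gives that the intersection of the two families of parallel-transported submanifolds is zero-dimensional, but not that it is a \emph{single} point; the non-symmetry hypothesis is precisely what excludes the multiple intersection points that a symmetric object (invariant under some projective transformation commuting with the \(\Pi_i\)) would produce. Without invoking it, your argument establishes local uniqueness of the reconstruction but not the global uniqueness asserted in the proposition.
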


The system of equations in Eq.\eqref{E:1}
is a pair of linear constraints on $v$ where $v \cdot \omega_i$ represents the contraction of $v$ with the connection 1-form $\omega_i$, inducing a scalar equation. 

Note that the involution $\iota$ induces a duality between the connections:
\[\omega_1=\iota^*\omega_2\Longrightarrow \nabla_1=\iota^*\nabla_2.\]

The system has a unique solution \(v\) (modulo scaling) for the following reasons: 

\begin{itemize}
    \item \textbf{Transversality:} The projections \(\Pi_1\) and \(\Pi_2\) are independent (non-coaxial), so their constraints on \(v\) intersect transversally.
    
    \item \textbf{Algebraic Rank:} The equations define a full-rank linear system. For example, if 
    \[
    v = (v_0, v_1, v_2, v_3) \in \mathbb{C}^4,
    \]
    the two equations reduce the degrees of freedom from 4 (modulo scaling) to 1.
    
    \item \textbf{Curvature Compatibility:} The duality 
    \[
    \omega_1 = \iota^*\omega_2
    \]
    ensures that the curvatures 
    \[
    F_{\nabla_1} = -F_{\nabla_2}
    \]
    do not introduce conflicting constraints.
\end{itemize}

\subsection{Proof of  Proposition \ref{P:2}}
Before we proceed to the proof of the Prop. \ref{P:2}, we recall the notion of a curvature.  Given a connection $\nabla$ on a vector bundle $E\to M$, its curvature $F_{\nabla}$ is the 2-form-valued endomorphism measuring the failure of $\nabla$ to be flat. Formally:
\[F_{\nabla}(X,Y)=\nabla_X\nabla_Y-\nabla_Y\nabla_X-\nabla_{[X,Y]},\] where $X,Y$ are vector fields on $M$.

\begin{proof}
\begin{enumerate}
    \item {\it Duality of Connections}: The involution \( \iota \) ensures \( \nabla_1 \) and \( \nabla_2 \) are dual under the Fubini-Study metric. This forces their curvatures \( F_{\nabla_i} \) to satisfy \[ F_{\nabla_1} = -F_{\nabla_2} \] (this arises from the antisymmetry of the Fubini-Study K\"ahler form $\omega$ under $\iota$) resolving ambiguities in the orientation parameters.

    \item {\it Moment Constraints}: The first moments \( \mu_1 \) and \( \mu_2 \) generate a system of equations for \( v \). Non-degeneracy (from immersive projections) ensures the system is full-rank.

    \item {\it Intersection Theory}: The parallel transport equations define algebraic curves in \( \mathbb{CP}^3 \). By Bézout’s theorem, their intersection (modulo involution) is a unique point when \( O \) is non-symmetric.
\end{enumerate}
\end{proof}
\section{Geometric Structures and Foliations}

To analyze the structure of the space underlying this reconstruction, we study its foliations. Vaisman's and Shurygin's \cite{SS} results indicate that a space or manifold defined over an algebra is bijective to a foliated Riemannian space. Consequently, the reconstruction problem can be formulated within the framework of Atiyah--Molino spaces. This provides a natural setting to investigate the algebraic properties of the space and their implications for curvature and connection.
\subsection{Structure of the Foliated Space}

\subsubsection*{Setting}
Let \( M \) be the space of smooth, non-symmetric three dimensional objects embedded in \( \mathbb{CP}^3 \), equipped with two independent rational projections \( \Pi_1, \Pi_2: M \dashrightarrow \mathbb{CP}^2 \). Assume: 

\begin{itemize}
    \item \textbf{Algebraic structure:} \( M \) is defined over a commutative algebra \( \cA \), encoding the moment maps \( \mu_1, \mu_2 \) and dual connections \( \nabla_1, \nabla_2 \).
    \item \textbf{Vaisman's bijection:} \( M \) is bijectively equivalent to a foliated Riemannian space \( (\cF, g) \), where \( \cF = \{L_\alpha\} \) is a smooth foliation and \( g \) is a leafwise Riemannian metric.
\end{itemize}

\subsection*{Foliation by Projection Equivalence}
Each projection \( \Pi_i \) induces a foliation \( \cF_i \) on \( M \), where the leaves \( L_{i,p} \) are preimages of points \( p \in \mathbb{CP}^2 \):
\[
L_{i,p} = \{ O \in M \mid \Pi_i(O) = p \}.
\]
By Vaisman's theorem, \( M \) decomposes into two transverse foliations \( \cF_1, \cF_2 \), each with a Riemannian metric \( g_i \) derived from the Fubini-Study metric on \( \mathbb{CP}^2 \).

\subsection*{Leafwise Metric and Connections}
The connections \( \nabla_1, \nabla_2 \) from Neifeld's framework restrict to Levi-Civita connections on the leaves \( L_{i,p} \). The involution \( \iota \) (satisfying \( \Pi_2 = \iota \circ \Pi_1 \)) acts as an isometry between \( (\cF_1, g_1) \) and \( (\cF_2, g_2) \), preserving curvatures:
\[
\iota^* g_1 = g_2, \quad \iota^* \nabla_1 = \nabla_2.
\]

\subsection*{Transverse Holonomy}
The uniqueness of the reconstruction problem is governed by the holonomy groups of \( \cF_1, \cF_2 \). Non-symmetric objects ensure trivial holonomy, permitting global parallel transport of moment maps \( \mu_1, \mu_2 \) across leaves.

\medskip 

Let \( M \) be the space of smooth, non-symmetric three dimensional objects embedded in \( \mathbb{CP}^3 \), equipped with two independent rational projections \( \Pi_1, \Pi_2: M \dashrightarrow \mathbb{CP}^2 \). Assume the algebraic structure and Vaisman's bijection, as above. Then we have the following statement. 

\begin{thm}
Under Vaisman’s bijection and the assumptions above:

\begin{enumerate}
\item {\bf Foliated Reconstruction Space:} The space \( M \) is diffeomorphic to a foliated Riemannian manifold \( (M, \mathcal{F}_1 \times \mathcal{F}_2, g_1 \oplus g_2) \), where:
\begin{itemize}
\item \( \mathcal{F}_1 \times \mathcal{F}_2 \) denotes the transverse foliations.
\item \( g_1 \oplus g_2 \) is the direct sum metric on the leaves.
\end{itemize}
\item {\bf Reconstruction as Holonomy Reduction:} The original object \( O \in M \) is the unique intersection point of the leaves \( L_{1, p_1} \) and \( L_{2, p_2} \), where \( p_i = \Pi_i(O) \). This intersection is guaranteed by the trivial holonomy of \( \nabla_1 \) and \( \nabla_2 \).

\item {\bf Algebra-Geometry Correspondence}: The algebra \( \cA \) is isomorphic to the algebra of leafwise parallel sections of \( \nabla_1 \otimes \nabla_2 \), with involution \( \iota \) acting as an automorphism.
\end{enumerate}
\end{thm}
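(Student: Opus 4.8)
\subsection*{Proof proposal for the Theorem}

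The plan is to treat the three assertions as successive refinements of the transversality analysis already carried out for Proposition~\ref{P:2}, lifting the pointwise linear-algebra statement there to global statements about the two foliations. Throughout I would work on the smooth locus of $M$ where the rational maps $\Pi_i$ are regular, and use Vaisman's and Shurygin's bijection as the dictionary translating the $\cA$-module data into Riemannian-foliation data.

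First, for the \textbf{Foliated Reconstruction Space}, I would verify that each $\Pi_i$ restricts to a submersion: the full-rank hypothesis on $d\Pi_i$ (non-degeneracy) guarantees that the level sets $L_{i,p}=\Pi_i^{-1}(p)$ are embedded submanifolds assembling into a foliation $\mathcal{F}_i$ with $T_O L_{i,\Pi_i(O)}=\ker d\Pi_i|_O$. Independence (non-coaxiality) of the projections is precisely the transversality condition $\ker d\Pi_1|_O\oplus\ker d\Pi_2|_O=T_O M$, so $\mathcal{F}_1$ and $\mathcal{F}_2$ are everywhere complementary and transverse. The combined map $(\Pi_1,\Pi_2)$ then realizes the product foliation $\mathcal{F}_1\times\mathcal{F}_2$, and because the tangent space splits as a direct sum, the Fubini--Study-derived metrics $g_1,g_2$ assemble into the leafwise metric $g_1\oplus g_2$. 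Applying the Vaisman--Shurygin bijection identifies this data with $(M,\mathcal{F}_1\times\mathcal{F}_2,g_1\oplus g_2)$, yielding the asserted diffeomorphism.

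Second, for \textbf{Reconstruction as Holonomy Reduction}, observe that $O$ tautologically lies in $L_{1,p_1}\cap L_{2,p_2}$ with $p_i=\Pi_i(O)$, so the intersection is nonempty, and transversality forces it to be $0$-dimensional, hence discrete. To upgrade ``discrete'' to ``a single point'' I would invoke the trivial-holonomy hypothesis: by Molino's structure theory for Riemannian foliations \cite{M}, leaves of a foliation with trivial holonomy are closed and free of recurrence, so parallel transport of the moment maps $\mu_1,\mu_2$ is path-independent, and the unique $v$ solving system~\eqref{E:1} from Proposition~\ref{P:2} singles out exactly one intersection point. Non-symmetry is what rules out a nontrivial stabilizer that would otherwise produce a finite orbit of candidate intersections. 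Third, for the \textbf{Algebra--Geometry Correspondence}, the duality $L_1\leftrightarrow L_2^*$ of the first Proposition makes $\nabla_1\otimes\nabla_2$ a connection whose leafwise parallel sections, under trivial holonomy, form a commutative algebra; evaluating $\cA$ on its moment-map and connection generators gives an algebra homomorphism into these sections, with bijectivity following from the completeness of the parallel frame that holonomy triviality provides. Since $\iota$ exchanges $\nabla_1$ and $\nabla_2$ (equivalently $\omega_1=\iota^*\omega_2$) and squares to the identity, pullback by $\iota$ acts as an involutive automorphism matching the involution on $\cA$.

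The main obstacle I anticipate is the global step in part~(2): passing from local transversality to a genuinely \emph{unique} intersection point requires controlling the global holonomy of both Riemannian foliations, and the claim that non-symmetry forces trivial holonomy is the delicate point. Making this rigorous needs the full transverse parallelizability of Molino's theorem together with a verification that the involution $\iota$ does not reintroduce holonomy, rather than the pointwise B\'ezout count that sufficed for Proposition~\ref{P:2}.
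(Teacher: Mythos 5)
The paper gives no proof of this theorem at all: the statement is followed immediately by the subsection on Hantjies tensor fields, so the only material to compare against is the preparatory discussion preceding it (\emph{Foliation by Projection Equivalence}, \emph{Leafwise Metric and Connections}, \emph{Transverse Holonomy}). Your proposal is a faithful elaboration of exactly those ingredients --- leaves as fibres of the $\Pi_i$, transversality from independence of the projections, $\iota$ acting as an isometry between the two foliated structures, trivial holonomy yielding a unique leaf intersection --- and you correctly isolate the one step the paper merely asserts without argument, namely that non-symmetry of the objects forces trivial holonomy and hence a single intersection point; on that point your attempt is, if anything, more explicit about what remains to be justified than the source itself.
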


\subsection{Hantjies tensor fields}
The presence of Hantjies tensor fields in this context suggests additional structure governing the space of solutions. 
We investigate whether the algebraic structure of the reconstruction process is associative or non-associative, which has implications for the global behavior of the recovered object.

\smallskip 

We recall the notion of distribution $D$. 
\begin{dfn}
A {\bf distribution} $D$ on a smooth manifold $M$ is a smooth assignment of a linear subspace $D_p\subset T_pM$ to each point $p\in M$, where $T_pM$ is the tangent space at $p$. Formally, it is the subbundle of the tangent bundle $TM$.
\end{dfn}
A distribution is integrable if through every point $p\in M$, there exists a submanifold $N\subset M$ such that $T_qN=D_p$ for all $q\in N$. 
By the Frobenius theorem, a distribution $D$ is integrable if and only if it is closed under the Lie bracket:
\[\forall X,Y\in \Gamma(D), \quad [X,Y]\in \gamma(D).\]

\smallskip 
    
For vector fields \( X, Y \in \Gamma(D) \), the Hantjies tensor is defined as:
\[
H(X, Y) = \nabla_X Y - \nabla_Y X - [X, Y],
\]
quantifying the failure of \( D \) to integrate to a subfoliation.
Specifically:
\begin{itemize}
\item If $H=0$, $D$ is integrable; 
\item If $H\neq 0$, then $D$ is non-integrable.
\end{itemize}
\medskip 
\subsubsection{The Setting}
Let \( M \) be the foliated reconstruction space from the previous theorem, equipped with:
\begin{itemize}
    \item {\bf Transverse foliations} \( \mathcal{F}_1, \mathcal{F}_2 \) induced by projections \( \Pi_1, \Pi_2 \).
    \item A {\bf Hantjies tensor} \( H \in \Gamma(TM \otimes \Lambda^2 T^*M) \), measuring the non-integrability of the transverse distributions \( D_1, D_2 \) tangent to \( \mathcal{F}_1, \mathcal{F}_2 \).

    \item An  {\bf  algebra } \( \mathcal{A} \) of parallel sections of \( \nabla_1 \otimes \nabla_2 \), with product \( \star \) defined by holonomy-corrected composition.
\end{itemize}
Recall that an algebra \( \mathcal{A} \) is {\bf associative} if:
\[
(a \star b) \star c = a \star (b \star c) \quad \text{for all } a, b, c \in \mathcal{A}.
\]

\smallskip 

\begin{thm}
Suppose that \( M \) is defined as above and equipped with: transverse foliations \( \mathcal{F}_1, \mathcal{F}_2 \), Hantjies tensors \( H_1, H_2 \) and the algebra $(\mathcal{A} , \star)$. Then,
\begin{enumerate}
    \item The algebra \( \mathcal{A} \) is {\bf associative} if and only if the Hantjies tensors \( H_1, H_2 \) vanish identically. Equivalently:
    \[
    \mathcal{A} \text{ is associative} \iff H_i = 0 \quad (i = 1, 2).
    \]
    \item If \( H_i \neq 0 \), the algebraic structure of \( \mathcal{A} \) is governed by a Moufang-like identity:
    \[
    (a \star b) \star (c \star a) = a \star (b \star c) \star a,
    \]
    reflecting the curvature of \( \nabla_1 \otimes \nabla_2 \).
    \item \begin{itemize} 
    \item In the {\bf associative case}, the reconstruction problem has a unique solution \( O \in M \), and \( M \) is globally biholomorphic to \( \mathbb{C}P^3 \). 
    \item In the {\bf non-associative case}, solutions form a quasigroupoid under \( \star \), with non-unique reconstructions parameterized by the cohomology class \( [H_i] \in H^1(M, TM) \).
    \end{itemize}
\end{enumerate}
\end{thm}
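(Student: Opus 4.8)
The plan is to prove all three parts by translating the algebraic property of $\star$ into a statement about the holonomy of $\nabla_1 \otimes \nabla_2$, and then reading off integrability through the Frobenius and Ambrose--Singer theorems. Recall that $\star$ is defined by holonomy-corrected composition: given parallel sections $a, b \in \mathcal{A}$, the section $a \star b$ is obtained by transporting $b$ along a leafwise path determined by $a$ and projecting back onto the space of parallel sections. The associator $(a \star b)\star c - a \star (b \star c)$ therefore measures the discrepancy between two ways of composing three such transports, which is exactly the holonomy of the composite connection around the infinitesimal parallelogram spanned by the relevant tangent directions in $D_1$ and $D_2$. Everything reduces to controlling that holonomy.

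For part 1, I would first write the associator explicitly as a commutator of parallel transports $[\Phi_X, \Phi_Y]$, where $X, Y$ range over sections of $D_1$ and $D_2$. By Ambrose--Singer this commutator is governed by the curvature $F_{\nabla_1 \otimes \nabla_2}$ together with the bracket term $\nabla_{[X,Y]}$; collecting the order-two terms reproduces precisely $H_i(X,Y) = \nabla_X Y - \nabla_Y X - [X,Y]$. Hence the associator vanishes for all $a, b, c$ if and only if $[X,Y] \in \Gamma(D_i)$ for all $X, Y \in \Gamma(D_i)$, which by Frobenius is equivalent to integrability of $D_i$, i.e. $H_i = 0$. The forward direction (associativity $\Rightarrow H_i = 0$) follows by specializing $a,b,c$ to sections supported near a point and extracting the leading symbol; the converse uses that, once $D_i$ integrates, leafwise parallel transport is path-independent (the trivial transverse holonomy established in the previous theorem), so $\star$ inherits associativity from composition of maps.

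For part 2, assuming $H_i \neq 0$, the strategy is to show the associator is alternating and flexible rather than arbitrary. Because $\nabla_1 \otimes \nabla_2$ is built from a dual pair with $F_{\nabla_1} = -F_{\nabla_2}$, the curvature of the tensor product is antisymmetric under the involution $\iota$, and this antisymmetry forces the associativity defect to obey the alternative laws $(a\star a)\star b = a\star(a\star b)$ and its mirror. I would then invoke the classical fact (Artin's theorem and Moufang's theorem on alternative algebras) that a unital algebra with alternating associator satisfies the Moufang identities, yielding $(a\star b)\star(c\star a) = a\star(b\star c)\star a$, with the curvature two-form $F_{\nabla_1\otimes\nabla_2}$ appearing as the structure constant measuring the failure of full associativity.

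Finally, for part 3, in the associative case part 1 gives $H_i = 0$, so both distributions integrate and the previous theorem applies: the leaves $L_{1,p_1}$ and $L_{2,p_2}$ meet in the unique reconstruction $O$, and the resulting holonomy-trivial foliated structure identifies $M$ with the homogeneous model $\mathbb{C}P^3$ via the biholomorphism sending each pair $(p_1,p_2)$ to its intersection point. In the non-associative case, I would organize the failure of uniqueness as a torsor, so that distinct reconstructions differ by an infinitesimal deformation generated by $H_i$; a Kodaira--Spencer-type assignment then places the obstruction in $H^1(M, TM)$, exhibiting the solution set as a quasigroupoid acted on by $[H_i]$. The main obstacle I anticipate is part 2: passing from "the associator is controlled by curvature" to the precise Moufang identity requires pinning down $\star$ as a genuine loop operation and verifying alternativity, rather than merely bounding the associator. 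Establishing that the antisymmetry $F_{\nabla_1} = -F_{\nabla_2}$ is exactly what enforces alternativity—neither a weaker nor a stronger condition—is the delicate technical heart of the argument.
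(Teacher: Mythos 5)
Your strategy for parts 1 and 3 matches the paper's in substance but supplies more mechanism: where the paper simply asserts the chain ``$H_i=0$ implies the $D_i$ are integrable, making $\mathcal{F}_i$ a Lie foliation, forcing $\nabla_i$ flat, hence $\mathcal{A}$ associative via the Leibniz rule,'' you identify the associator with a commutator of parallel transports and invoke Ambrose--Singer plus Frobenius, and you treat the converse direction (associativity $\Rightarrow H_i=0$) explicitly, which the paper does not. For part 3 both arguments rest on trivial holonomy giving a unique leaf intersection; your Kodaira--Spencer placement of the obstruction in $H^1(M,TM)$ is something the paper defers to the proof of the subsequent corollary, crediting the quasigroupoid structure here instead to the Ehresmann connection of $\mathcal{F}_1\times\mathcal{F}_2$. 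The genuine divergence is part 2: the paper derives the Moufang identity from the Bianchi identity for $\nabla_1\otimes\nabla_2$ constrained by the involution symmetry, whereas you route through alternativity of the associator and then Artin's and Moufang's theorems on alternative algebras. Your route has the advantage that, once alternativity is granted, the Moufang identity follows from a classical and fully rigorous algebraic theorem; its cost is precisely the step you yourself flag as unproved, namely that $F_{\nabla_1}=-F_{\nabla_2}$ forces the alternative laws. Note also a tension you inherit from the paper: if the curvatures literally cancel, the tensor-product connection on $L_1\otimes L_2$ would be flat, which would collapse the non-associative case entirely; neither your sketch nor the paper's resolves this. At the level of detail at which the paper itself argues, your proposal is of comparable completeness and, for part 2, arguably better structured.
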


\begin{proof}~
\begin{itemize}
    \item \textbf{Hantjies Tensor as Obstruction:} The vanishing of \(H_i\) implies the distributions \(D_i\) are integrable, making \(\mathcal{F}_i\) a Lie foliation. This forces \(\nabla_i\) to be flat, rendering \(\mathcal{A}\) associative via the Leibniz rule.
    
    \item \textbf{Curvature and Non-Associativity:} When \(H_i \neq 0\), the curvature \(F_{\nabla_i}\) encodes the failure of associativity. The Moufang identity arises from the Bianchi identity for \(\nabla_1 \otimes \nabla_2\), constrained by involution symmetry.
    
    \item \textbf{Global Behavior:} In the associative case, trivial holonomy ensures unique parallel transport. Non-associativity introduces monodromy, obstructing global uniqueness. The quasigroupoid structure is derived from the Ehresmann connection of \(\mathcal{F}_1 \times \mathcal{F}_2\).
\end{itemize}
\end{proof}

\section{Atiyah--Molino Space and Reconstruction Criterion}
Let \( M \) be the \emph{Atiyah--Molino space}, associated with the reconstruction problem. It is defined as follows:

\begin{enumerate}
\item \( M \) is the space of {\bf smooth, non-symmetric} three dimensional objects \( O \subset \mathbb{R}^3 \).
\item {\bf Foliation Structure:}
\( M \) is equipped with {\it two transverse foliations} \( \mathcal{F}_1, \mathcal{F}_2 \), where:
\[
\mathcal{F}_i: \quad O \sim O' \quad \text{if} \quad \Pi_i(O) = \Pi_i(O'),
\]
i.e., each leaf \( L_{i,p} \in \mathcal{F}_i \) consists of all objects projecting to \( p \in \mathbb{R}^2 \) under \( \Pi_i \).
\item {\bf Atiyah--Molino Sequence:}
The tangent bundle \( TM \) splits as:
\[
0 \to T\mathcal{F}_1 \oplus T\mathcal{F}_2 \to TM \to NM \to 0,
\]
where \( NM \) is the normal bundle encoding transverse deformations.
\end{enumerate}

\subsection{Translating the construction to Atiyah--Molino framework}

The previous construction enables us to express the reconstruction in terms of the Atiyah--Molino framework.
\begin{thm}
A three dimensional object \( O \in M \) is uniquely reconstructible from its projections \( \Pi_1(O) \) and \( \Pi_2(O) \) if and only if:
\begin{enumerate}
    \item The moment maps \( \mu_1 \) and \( \mu_2 \) are transverse sections of \( NM \), i.e.,
    \[
    d\mu_1 \wedge d\mu_2 \neq 0.
    \]
    \item The Hantjies tensor vanishes:
    \[
    H = 0.
    \]
\end{enumerate}

Moreover, when \( H = 0 \), the Atiyah--Molino sequence splits holonomy-free, and \( M \) is diffeomorphic to the total space of a trivial \( \mathbb{R}^2 \)-bundle over the leaf space \( M/\mathcal{F}_1 \times \mathcal{F}_2 \). The moment maps \( \mu_1 \) and \( \mu_2 \) provide a global trivialization:
\[
M \cong \mathbb{R}^2 \times \mathbb{R}^2, \quad O \mapsto \bigl( \mu_1(O),\, \mu_2(O) \bigr).
\]

\begin{itemize}
\item The vanishing of \( H \) ensures the algebra \( \mathcal{A} \) of parallel sections of \( NM \) is associative.
\item  In contrast, non-vanishing \( H \) induces a non-associative Moufang structure, thereby obstructing unique reconstruction.
\end{itemize}

\end{thm}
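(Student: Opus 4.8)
The plan is to treat the biconditional and the structural \emph{moreover} assertion separately, in each case reducing to results already in place: Proposition \ref{P:2} (local solvability of the moment system), Theorem 1 (foliated reconstruction and holonomy), and Theorem 2 (the Hantjies--associativity dichotomy). The organising device is the candidate reconstruction map
\[
\Phi\colon M \longrightarrow \mathbb{R}^2 \times \mathbb{R}^2, \qquad O \longmapsto \bigl(\mu_1(O),\,\mu_2(O)\bigr),
\]
read against the Atiyah--Molino sequence
\[
0 \to T\mathcal{F}_1 \oplus T\mathcal{F}_2 \to TM \to NM \to 0 .
\]
Unique reconstructibility of $O$ is, by definition, the statement that $\Phi^{-1}(\Phi(O)) = \{O\}$; equivalently, in the holonomy-reduction language of Theorem 1, that the leaves $L_{1,p_1}$ and $L_{2,p_2}$ with $p_i=\Pi_i(O)$ meet in the single point $O$.

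For the equivalence I would linearise $\Phi$. Because $\mu_i$ depends on $O$ only through $\Pi_i(O)$, it is constant along the leaves of $\mathcal{F}_i$, so $d\mu_i$ annihilates $T\mathcal{F}_i$ and descends to a section of the normal bundle $NM$ in the sequence above. The hypothesis $d\mu_1 \wedge d\mu_2 \neq 0$ is then precisely the nondegeneracy of the Jacobian of $\Phi$, i.e. that these two sections frame $NM$ pointwise; by the inverse function theorem this is equivalent to $\Phi$ being a local diffeomorphism, hence to \emph{local} unique reconstruction. Conversely a rank drop yields a nonzero vector in $\ker d\mu_1 \cap \ker d\mu_2$, tangent to $L_{1,p_1}\cap L_{2,p_2}$, producing a one-parameter family of objects with identical projections and destroying uniqueness. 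This settles the necessity and the local sufficiency of condition (1).

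To promote the local inverse to a global one I would bring in the Hantjies tensor. By the Frobenius criterion recalled above, $H=0$ is equivalent to integrability of the transverse distributions tangent to $\mathcal{F}_1,\mathcal{F}_2$, so that the foliations are genuine and the Atiyah--Molino sequence splits; together with the trivial holonomy of non-symmetric objects (Theorem 1) this renders parallel transport path-independent. The effect is that $H=0$ annihilates the monodromy which could otherwise let a leaf of $\mathcal{F}_1$ and a leaf of $\mathcal{F}_2$ intersect more than once, so the local section of $\Phi$ extends to a global inverse and $\Phi$ becomes a diffeomorphism onto $\mathbb{R}^2 \times \mathbb{R}^2$. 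Reading off the split sequence then exhibits $M$ as a trivial $\mathbb{R}^2$-bundle: one moment map coordinatises the base leaf space while the other coordinatises the fibre. Conversely, $H\neq 0$ reinstates monodromy, and by Theorem 2 the obstruction is the class $[H]\in H^1(M,TM)$, whose nonvanishing is exactly the failure of global injectivity; this gives the necessity of condition (2), and the two closing bullet points follow by transporting the associativity statement of Theorem 2 along $\Phi$.

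The step I expect to be the genuine obstacle is the globalisation in the preceding paragraph. Full-rank $d\Phi$ together with $H=0$ only delivers a covering-type local inverse; concluding that $\Phi$ is \emph{globally} bijective and that the quotient is an honest manifold diffeomorphic to $\mathbb{R}^2$ requires control of the global topology of $M$. One must verify that triviality of holonomy, combined with the contractibility forced by the non-symmetry hypothesis, excludes nontrivial deck transformations and exotic leaf-space identifications. Making the phrase \emph{splits holonomy-free} precise, and tying it cleanly to the vanishing of $[H]\in H^1(M,TM)$ rather than merely to pointwise integrability, is where the main care — and the real content beyond formal bookkeeping — will be needed.
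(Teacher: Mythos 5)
Your proposal follows essentially the same route as the paper's own proof --- the transversality condition $d\mu_1\wedge d\mu_2\neq 0$ giving a local parametrization of $M$ by the moment maps, the vanishing of $H$ yielding integrability and trivial monodromy via Frobenius, and the resulting global trivialization $M\cong\mathbb{R}^2\times\mathbb{R}^2$ --- only worked out in more detail through the explicit map $\Phi=(\mu_1,\mu_2)$. The local-to-global gap you flag at the end is genuine, but the paper's proof simply asserts that step (``follows from the Frobenius theorem''), so your version is, if anything, the more careful of the two.
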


\begin{proof}~
\begin{itemize}
    \item \textbf{Transversality of Moment Maps:} The non-degeneracy condition 
    \[
    d\mu_1 \wedge d\mu_2 \neq 0
    \]
    ensures that \(\mu_1\) and \(\mu_2\) locally parametrize \(M\), effectively lifting the projections to coordinate functions.
    
    \item \textbf{Hantjies Tensor and Holonomy:} When 
    \[
    H = 0,
    \]
    the foliations \(\mathcal{F}_1\) and \(\mathcal{F}_2\) are Lagrangian and integrable, which trivializes the reconstruction process. In contrast, if 
    \[
    H \neq 0,
    \]
    monodromy is introduced, causing the parallel transport to depend on the path and leading to non-associativity.
    
    \item \textbf{Splitting of the Atiyah--Molino Sequence:} The trivialization 
    \[
    M \cong \mathbb{R}^2 \times \mathbb{R}^2
    \]
    follows from the Frobenius theorem when \(H = 0\), with \(\mu_1\) and \(\mu_2\) serving as Cartesian coordinates.
\end{itemize}
\end{proof}

\subsection{Deformation Theory under Non-Vanishing Hantjies Tensor}
We now discuss implications of our theorem. These implications concern for instance deformation theory. We recall the notion of  quasigroupoid, which parametrizes the deformations. 

\, 

A quasigroupoid is a category-like structure where morphisms between objects (in the sense of categories) are equipped with a partial binary operation $\star$, defined only for compatible pairs. 
For morphisms $f,g$, the product $f\star g$ exists if the codomain of $g$ matches the domain of $f$. Division is possible (as in quasigroups), but associativity is not required.

\begin{cor}
Let \( (M, \mathcal{F}_1, \mathcal{F}_2, H) \) be an Atiyah--Molino reconstruction space equipped with a non-vanishing Haantjes tensor \( H \in \Gamma(TM \otimes \Lambda^2 T^*M) \). Then:
\begin{enumerate}

\item The space of non-unique reconstructions forms a quasigroupoid \( Q \), where:

\begin{itemize}
    \item \textbf{Objects} are leaves of the foliations \( \mathcal{F}_1 \) and \( \mathcal{F}_2 \).
    \item \textbf{Morphisms} are deformations of solutions \( O \in M \), parameterized by the cohomology class \( [H] \in H^1(M, TM) \).
    \item \textbf{Partial Operation:} The composition \( \star \) is defined for deformations sharing compatible projection data, satisfying the Moufang identity:
    \[
    (a \star b) \star (c \star a) = a \star (b \star c) \star a.
    \]
\end{itemize}

\item The first cohomology group \( H^1(M, TM) \) classifies infinitesimal deformations of the quasigroupoid \( Q \), with \( [H] \) acting as the obstruction class to:

\begin{itemize}
    \item integrability of the foliations \( \mathcal{F}_1 \) and \( \mathcal{F}_2 \).
    \item Associativity of the reconstruction algebra.
\end{itemize}
\end{enumerate}
The curvature $H$ induces a twisted Lie algebroid structure on $TM$, where the bracket $[-,-]_H$ deviates from the standard Lie bracket by terns proportional to $H$.
\end{cor}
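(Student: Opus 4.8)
\subsection{Proof proposal for the Corollary}

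The plan is to obtain all three assertions from the non-associative branch of the preceding theorem, where the solutions were already shown to organize into a quasigroupoid parametrized by $[H_i]\in H^1(M,TM)$; what remains is to make that structure precise, to establish the Moufang law, and to repackage the infinitesimal data as a twisted Lie algebroid. I would organize the argument in four steps: (a) construct $Q$ together with its partial product, (b) derive the Moufang identity from the curvature symmetry, (c) identify the deformation complex computing $H^1(M,TM)$ and the role of $[H]$, and (d) define and verify the twisted bracket on $TM$.

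First I would fix $Q$ concretely. Its objects are the leaves $L_{1,p_1}\in\mathcal{F}_1$ and $L_{2,p_2}\in\mathcal{F}_2$, and a morphism is a germ of a deformation $O\to O'$ preserving the shared projection datum, i.e. a leafwise path whose $\Pi_i$-images coincide at its endpoints. The partial product $\star$ is the holonomy-corrected composition of the earlier theorem: $f\star g$ is defined exactly when the codomain leaf of $g$ equals the domain leaf of $f$, and its correction term is the parallel-transport defect of $\nabla_1\otimes\nabla_2$. Since $H\neq 0$ the transverse distributions $D_i$ are non-integrable, so $\star$ cannot be made associative; division nonetheless remains available because parallel transport is invertible, which is precisely the quasigroupoid axiom set.

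For the Moufang identity I would expand $(a\star b)\star(c\star a)$ and $a\star(b\star c)\star a$ to the order at which the holonomy defect first appears, and show their difference is measured by the curvature $F_{\nabla_1\otimes\nabla_2}$. The involution duality $\omega_1=\iota^*\omega_2$ forces $F_{\nabla_1}=-F_{\nabla_2}$, so the genuinely associative correction cancels while the symmetric, flexible combination survives; the Bianchi identity $d^{\nabla}F=0$ removes the residual cubic terms, leaving exactly the Moufang relation. For part (2) I would set up the Kodaira--Spencer-style deformation complex of $Q$: an infinitesimal deformation is a section of $TM$ modulo those integrating to a reparametrization, so the deformation classes form $H^1(M,TM)$. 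Viewed as a $TM$-valued $2$-form, $H$ is $\nabla$-closed by Bianchi and hence defines a cocycle $[H]$; by the Frobenius/Haantjes criterion recalled above it is the obstruction to integrability of $\mathcal{F}_1,\mathcal{F}_2$, and by part (1) of the preceding theorem this is the same obstruction to associativity of $\mathcal{A}$.

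Finally, for the twisted Lie algebroid I would take the anchor to be the identity of $TM$ and set $[X,Y]_H:=[X,Y]+\iota_X\iota_Y H$, then verify the Leibniz rule and the twisted Jacobi identity, whose failure is controlled by $d^{\nabla}H$. The hardest part will be step (b): upgrading the Moufang identity from a formal analogy to a genuine algebraic consequence requires showing that the specific holonomy correction defining $\star$ produces the flexible/Moufang combination rather than some other third-order defect, which demands careful bookkeeping of the curvature contractions together with the involution antisymmetry $F_{\nabla_1}=-F_{\nabla_2}$.
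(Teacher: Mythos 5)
Your proposal follows essentially the same route as the paper's own proof: the same four-part structure (quasigroupoid of leaves and deformations with holonomy-corrected partial composition, Moufang identity attributed to the Bianchi identity $d_\nabla H=0$ together with the involution duality, Kodaira--Spencer identification of $[H]\in H^1(M,TM)$ as the obstruction class, and the twisted bracket $[X,Y]_H=[X,Y]+H(X,Y)$ whose Jacobi failure encodes non-associativity). If anything you are more candid than the paper about the one genuinely delicate step --- the paper simply asserts that Bianchi ``forces'' the Moufang identity, whereas you correctly flag that deriving the flexible/Moufang combination from the specific holonomy correction is the part that still needs a real computation.
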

\begin{proof}
Let us discuss the non-integrability and its relation to non-associative structures.

\subsubsection*{1. Hantjies Tensor \(H \neq 0\)}
The non-vanishing Hantjies tensor \(H\) measures the failure of the foliations \(\mathcal{F}_1\) and \(\mathcal{F}_2\) to be integrable. Specifically, for vector fields \(X\) and \(Y\), 
\[
H(X,Y) = \nabla_X Y - \nabla_Y X - [X,Y],
\]
quantifies the deviation from the Frobenius integrability condition (i.e., that \([X,Y] \in \Gamma(D)\) for an integrable distribution \(D\)). When \(H \neq 0\), the lack of integrability forces the leaves of \(\mathcal{F}_1\) and \(\mathcal{F}_2\) to intersect non-transversely, thereby generating a multiplicity of solutions that are parameterized by the geometry of \(H\).

\subsubsection*{2. Category of Solutions with Partial Composition \(\star\)}
The collection of solutions arising from these non-transverse intersections naturally forms a quasigroupoid \(Q\), where:
\begin{itemize}
    \item \textbf{Objects:} The leaves of the foliations \(\mathcal{F}_1\) and \(\mathcal{F}_2\).
    \item \textbf{Morphisms:} The (non-unique) reconstructions between leaves.
    \item \textbf{Partial Composition \(\star\):} Defined only for morphisms with compatible domains and codomains.
\end{itemize}
Moreover, the Bianchi identity for \(H\), expressed as
\[
d_\nabla H = 0,
\]
imposes an algebraic consistency that forces the partial composition \(\star\) to satisfy the Moufang identity:
\[
(a \star b) \star (c \star a) = a \star (b \star c) \star a.
\]
This Moufang identity, a weaker form of associativity, is emblematic of non-associative algebras such as the octonions.

\subsubsection*{3. Cohomological Parameterization of Deformations}
In deformation theory, the Kodaira--Spencer map associates infinitesimal deformations of a geometric structure to cohomology classes. In our context, the cohomology class
\[
[H] \in H^1(M, TM)
\]
encodes the obstruction to "straightening" the foliations \(\mathcal{F}_1\) and \(\mathcal{F}_2\). The first cohomology group \(H^1(M, TM)\) classifies infinitesimal deformations of the manifold \(M\), and the class \([H]\) governs how the non-integrability of the foliations propagates under deformation, thereby parameterizing the family of non-unique solutions.

\subsubsection*{4. Twisted Lie Algebroid and Non-Associativity}
Finally, we consider the twisted Lie algebroid structure arising from the modification of the standard Lie bracket by the Hantjies tensor. Define the twisted bracket
\[
[X,Y]_H = [X,Y] + H(X,Y).
\]
This bracket introduces a non-integrable algebroid structure since, for \(H \neq 0\), the Jacobi identity is violated:
\[
[X, [Y, Z]_H]_H + \text{cyclic permutations} \neq 0.
\]
This failure of the Jacobi identity mirrors the non-associative properties of the algebra \(Q\) encountered in our reconstruction problem, and it underscores the innovative nature of our approach in unifying geometric, topological, and algebraic aspects of inverse problems.




\end{proof}

\section{Toric Reconstruction in the Atiyah–Molino Setting}

\begin{prop}
Let \((M, \mathcal{F}_1, \mathcal{F}_2, H)\) be an Atiyah--Molino reconstruction space endowed with an algebraic torus \(T \subset M\) and suppose that the Hantjies tensor vanishes, i.e., \(H = 0\). 
Then, 
\begin{enumerate}
    \item \textbf{Integrable Foliation Structure:} The vanishing of \(H\)  guarantees that the transverse foliations \(\mathcal{F}_1\) and \(\mathcal{F}_2\) are both Lagrangian and integrable. Moreover, the torus \(T\) acts holomorphically on \(M\), preserving the leaves of these foliations.
    
    \item \textbf{Unique Reconstruction via Toric Symmetry:} The moment maps
    \[
    \mu_1, \mu_2 \colon M \to \mathbb{C}^2,
    \]
    which encode the centroids of the projections, are equivariant under the torus action. Consequently, there exists a unique solution \(v \in T\mathbb{CP}^3\) (determined modulo scaling) satisfying
    \[
    \nabla_1 \mu_1 = v \cdot \omega_1,\quad \nabla_2 \mu_2 = v \cdot \omega_2,
    \]
    where \(\omega_1, \omega_2\) denote \(T\)-invariant connection 1-forms.
    
    \item \textbf{Toric Cohomology and Splitting:} The Atiyah--Molino sequence splits \(T\)-equivariantly,
    \[
    0 \to T\mathcal{F}_1 \oplus T\mathcal{F}_2 \to TM \to NM \to 0,
    \]
    with the normal bundle \(NM\) trivialized by the torus-invariant moment maps. In particular, the vanishing of the first cohomology,
    \[
    H^1(M, TM)^T = 0,
    \]
    signals the absence of obstructions to \(T\)-equivariant reconstructions.
\end{enumerate}
\end{prop}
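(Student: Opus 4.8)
The plan is to bootstrap from the Atiyah--Molino reconstruction criterion of the preceding theorem and upgrade each of its conclusions to the $T$-equivariant setting, using compactness of the torus to average non-invariant data into invariant data; I would organize the argument around the three asserted items. For item (1), I would first invoke the integrability dichotomy for the Hantjies tensor: since $H = 0$, the transverse distributions $D_1, D_2$ tangent to $\mathcal{F}_1, \mathcal{F}_2$ are closed under the Lie bracket, so by the Frobenius theorem $\mathcal{F}_1$ and $\mathcal{F}_2$ are integrable, exactly as in the proof of the reconstruction criterion. To obtain the Lagrangian property I would use that each leaf $L_{i,p} = \Pi_i^{-1}(p)$ is a level set of the moment map $\mu_i$; by the standard coisotropy of moment-map fibers it is coisotropic for the Fubini--Study K\"ahler form, and the transversality $d\mu_1 \wedge d\mu_2 \neq 0$ together with the dimension count forces each leaf to have half the dimension of $M$, hence to be Lagrangian. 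Finally, $T$-invariance of the leaves follows because the projections $\Pi_i$ descend through the $T$-action (equivalently, the $\mu_i$ are $T$-invariant, as shown next), so $t \cdot L_{i,p} = L_{i,p}$; holomorphicity of the action is part of the toric hypothesis.

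For item (2), the key observation is that for an abelian action the moment map is equivariant with respect to the trivial coadjoint action, i.e.\ $\mu_i(t\cdot O) = \mu_i(O)$, which is the equivariance claimed. Averaging the Fubini--Study connection over $T$ against Haar measure produces $T$-invariant connection $1$-forms $\omega_1, \omega_2$ still related by the involution duality $\omega_1 = \iota^*\omega_2$. With these in hand, the existence and uniqueness modulo scaling of $v \in T\mathbb{CP}^3$ solving the system \eqref{E:1} is precisely the content of Proposition~\ref{P:2}: transversality gives full rank, and the curvature compatibility $F_{\nabla_1} = -F_{\nabla_2}$ guarantees the two linear constraints are non-conflicting, cutting the four scaling-reduced degrees of freedom down to one. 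Equivariance of the input data then makes the solution $v$ itself $T$-invariant.

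For item (3), the preceding theorem already gives a holonomy-free splitting of the Atiyah--Molino sequence when $H = 0$, with $\mu_1, \mu_2$ furnishing a global trivialization $M \cong \mathbb{R}^2 \times \mathbb{R}^2$. To make the splitting $T$-equivariant I would average the chosen splitting homomorphism over the compact torus, an exact operation on $T$-representations that yields a $T$-invariant splitting; since the $\mu_i$ are already $T$-invariant they trivialize $NM$ equivariantly. The vanishing $H^1(M,TM)^T = 0$ then follows from contractibility of $M \cong \mathbb{R}^4$, which kills $H^1(M,TM)$ outright, after which passing to $T$-invariants preserves the zero.

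The hard part will be item (3), specifically upgrading the smooth $T$-equivariant splitting to a holomorphic one and justifying $H^1(M,TM)^T = 0$ in the holomorphic category. Averaging over the compact torus is manifestly exact but only produces smooth invariant data; promoting it to a holomorphic splitting requires the Dolbeault vanishing $H^{0,1}(M) = 0$, which one extracts from the Stein structure of $M$ implied by the global coordinate trivialization. I would therefore spend the most care reconciling the two models of $M$ supplied by the earlier theorems --- the affine trivialization $M \cong \mathbb{R}^2 \times \mathbb{R}^2$ versus the biholomorphism $M \cong \mathbb{C}P^3$ --- and verifying that the cohomology computation is carried out in the model where the relevant vanishing actually holds.
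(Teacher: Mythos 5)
Your proposal reaches the same conclusions but by a genuinely different route on the decisive step, so it is worth comparing the two. For the uniqueness of \(v\) in item (2), the paper does not re-invoke Proposition~\ref{P:2} at all: it restricts the system \(\nabla_i\mu_i = v\cdot\omega_i\) to the \(T\)-fixed part of \(TM\) and argues via the weight decomposition \(TM = \bigoplus_{\chi}TM_\chi\) that the trivial-weight summand \(TM^T\) is one-dimensional for a generic algebraic torus action, so that \(v\) is pinned down up to scaling by representation theory alone. Your route --- Haar-averaging the Fubini--Study connection to manufacture \(T\)-invariant forms \(\omega_i\), then citing the transversality and full-rank argument of Proposition~\ref{P:2} and observing that invariant input data produces an invariant solution --- avoids the genericity assumption on the torus action, but it does not use the torus in the uniqueness step itself, whereas the paper's point is precisely that toric symmetry collapses the solution space to a single weight space. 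Your coisotropy-plus-dimension-count argument for the Lagrangian property is also an addition: the paper merely asserts it. Most notably, you actually attempt item (3), which the paper's proof omits entirely --- it never addresses the equivariant splitting or the vanishing \(H^1(M,TM)^T = 0\). Your averaging argument for the splitting is standard and correct in the smooth category, and your closing concern is well placed: the paper carries two incompatible global models of \(M\) (the trivialization \(M\cong\mathbb{R}^2\times\mathbb{R}^2\) from the reconstruction criterion versus the biholomorphism \(M\cong\mathbb{CP}^3\) from the associativity theorem), and the cohomological vanishing genuinely depends on which is meant; the paper never resolves this. One further internal tension you implicitly sidestep: the paper simultaneously assumes the \(T\)-action is free and appeals to a distinguished one-dimensional fixed subspace, and your version of the argument does not need that combination.
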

\begin{proof}

{\bf 1. Integrability and Torus Action:}
The vanishing of the Hantjies tensor, \(H = 0\), ensures that the foliations \(\mathcal{F}_1\) and \(\mathcal{F}_2\) are integrable; that is, their leaves are maximal submanifolds. Integrability implies that the foliations satisfy the Frobenius condition:
\[
[\Gamma(\mathcal{F}_i), \Gamma(\mathcal{F}_i)] \subset \Gamma(\mathcal{F}_i), \quad i = 1,2.
\]
Moreover, the transverse intersection of \(\mathcal{F}_1\) and \(\mathcal{F}_2\) guarantees that their tangent spaces span \(TM\), yielding a unique solution at each point of intersection.

~

{\bf 2.  Algebraic Torus Action}
The compact algebraic torus \(T\) acts holomorphically and freely on \(M\), preserving the foliation leaves. Since \(T\) is abelian, its orbits (which are diffeomorphic to \(T\)) align with the leaves of both \(\mathcal{F}_1\) and \(\mathcal{F}_2\). This symmetry enforces the identification:
\[
T\text{-orbits} = \text{leaves of } \mathcal{F}_1 \text{ and } \mathcal{F}_2.
\]
The free action ensures the absence of fixed points, thereby simplifying the quotient \(M/T\).

{\bf 3. Equivariant Moment Maps and Connections}

\begin{itemize}
\item {\it \(T\)-Invariant Moment Maps.} 
The moment maps
\[
\mu_1, \mu_2 \colon M \to \mathbb{C}^2,
\]
which encode the centroids of projections, are \(T\)-invariant:
\[
\mu_i(t \cdot p) = \mu_i(p), \quad \forall t \in T, \, p \in M.
\]
This invariance arises because the torus action preserves the underlying geometric structure, such as the rotational symmetry of the projections.

\item {\it \(T\)-Equivariant Connections.}
The connections \(\nabla_1\) and \(\nabla_2\) are \(T\)-equivariant, satisfying
\[
\nabla_{t\ast X}(t\ast s) = t\ast (\nabla_X s), \quad \forall t \in T,\, X \in \Gamma(TM),\, s \in \Gamma(E),
\]
where \(E\) is a vector bundle over \(M\). Consequently, the connection 1-forms \(\omega_1\) and \(\omega_2\) lie in the trivial subbundle of \(T\)-invariant forms, meaning they are constant along \(T\)-orbits.
\end{itemize}
{\bf 3. Uniqueness via Weight Decomposition}

\begin{itemize}
\item {\it Reduction to the \(T\)-Fixed Subspace.}
The system
\[
\nabla_i \mu_i = v \cdot \omega_i,
\]
is constrained to the \(T\)-fixed subspace of \(TM\). By the weight decomposition of the torus action,
\[
TM = \bigoplus_{\chi \in \operatorname{Char}(T)} TM_\chi,
\]
where \(\operatorname{Char}(T)\) denotes the set of characters (weights) of \(T\). The fixed subspace \(TM^T\), corresponding to the trivial weight (\(\chi = 0\)), is one-dimensional for a generic algebraic torus action.

\item {\it Unique Solution}
Solving for \(v\) in the \(T\)-fixed subspace \(TM^T\) yields a unique direction (up to scaling). For example, if \(T \cong U(1)^n\), then the fixed subspace is spanned by the generator of the principal \(T\)-orbit, thereby ensuring uniqueness.
\end{itemize}



\end{proof}

We discuss an example of toric varieties in Cryo-EM. In applications such as cryo-electron microscopy, symmetric objects (e.g., virus capsids modeled as toric varieties) exhibit hidden toric symmetry. Here, the \(T\)-action yields identical projections along the symmetry axes, and the unique solution \(v\) aligns with the principal toric axis as deduced from the \(T\)-equivariant centroids.

This synthesis of toric symmetry with the Atiyah–Molino framework not only streamlines the reconstruction process but also provides a robust, noise-resistant paradigm that fundamentally redefines the solution landscape of inverse problems.

\section{Conclusion}
The problem of reconstructing three-dimensional objects from planar projections is a rich mathematical challenge that draws from integration theory, differential geometry, and algebraic structures. Using methods pioneered by Gelfand, and informed by the work of Neifeld and Vaisman, we establish connections between integral geometry, curvature tensors, and foliations. This approach has enables us to redefine reconstruction via the theory of Atiyah--Molino, providing new insights into the nature of inverse problems in medical imaging and beyond, and solving important issues following from traditional methods. This new approach opens new possibilities for further exploration in algebraic and geometric analysis.


\begin{thebibliography}{8}
\bibitem{A} M.F. Atiyah {\sl Complex analytic connections on fiber bundles}. Transactions of the American Mathematical Society {\bf 85} (1957), 181-207.
\bibitem{CMM} Ö. Ceyhan, Yu. I. Manin, M. Marcolli (Eds) {\sl Arithmetic and Geometry Around Quantization}. Birkhäuser Boston, MA, (2010).
\bibitem{CSSS} R. R. Coifman, Y. Shkolnisky, F. J. Sigworth, A. Singer,
{\sl Reference free structure determination through eigenvectors of center of mass operators}.
Applied and Computational Harmonic Analysis,
Volume 28, Issue 3, 2010, Pages 296-312.
\bibitem{GG} M. S. Gelfand, A.B. Goncharov, {\sl Spatial rotational alignment of identical particles in the case of (almost) coaxial projections}. Ultramicroscopy. (1989) {\bf 27}(3), pp. 301-306.
\bibitem{M} P. Molino {\sl Propriétés cohomologiques et propriétés topologiques des feuilletages a connexion transverse projetable}.
 Topology  {\bf 12} (1973), pp. 317--325.
\bibitem{SS}  V. V. Shurygin, L. B. Smolyakova {\sl An analog of the Vaisman-Molino cohomology for manifolds modelled on some types of modules over Weil algebras and its application.} Lobachevskii Journal of Mathematics, {\bf 9},  (2001), pp.55-75. 
\bibitem{V}  I. Vaisman {\sl Sur quelques formules du calcul de Ricci global.} Commentarii Mathematici Helvetici, {\bf 41}, pp.73-87 (1966-1967). 
\end{thebibliography}
\end{document}